\theoremstyle{plain}
\newtheorem{theorem}{Theorem}[section]
\newtheorem{proposition}[theorem]{Proposition}
\newtheorem{corollary}[theorem]{Corollary}
\theoremstyle{definition}
\theoremstyle{remark}
\newtheorem{remark}[theorem]{Remark}
\def\keywords{\xdef\@thefnmark{}\@footnotetext}
\title{Two Hornich-Hlawka-type and Gram matrix-based inequalities}
\author{Nizar El Idrissi and Hicham Zoubeir}
\newcommand{\Addresses}{{
  \bigskip
  \footnotesize

  \textbf{Nizar El Idrissi.}
  \par\nopagebreak Laboratoire : Equations aux dérivées partielles, Algèbre et Géométrie spectrales.
  \par\nopagebreak
  Département de mathématiques, faculté des sciences, université Ibn Tofail, 14000 Kénitra.\par\nopagebreak 
  \textit{E-mail address} : \texttt{nizar.elidrissi@uit.ac.ma}
  
  \medskip
  
  \textbf{Hicham Zoubeir.}
  \par\nopagebreak Laboratoire : Equations aux dérivées partielles, Algèbre et Géométrie spectrales.
  \par\nopagebreak
  Département de mathématiques, faculté des sciences, université Ibn Tofail, 14000 Kénitra.\par\nopagebreak 
  \textit{E-mail address} : \texttt{hzoubeir2014@gmail.com}

}}
\begin{document}
\newpage
\maketitle

\begin{abstract}
We establish two inequalities in real inner product spaces. The first is a multiplicative strengthening of the classical Hornich–Hlawka inequality: for all vectors $x,y,z$ in a real inner product space $H$
\[ \|x\|\,\|y\| + \|z\|\,\|x+y+z\| \;\geq\; \|x+z\|\,\|y+z\|. \]
We provide a complete characterization of the equality cases in terms of the linear dependence of $x,y,z$, and explicit conditions on their Gram matrix, showing in particular that equality occurs only in flat (at most two-dimensional) configurations. We also show that this inequality implies the classical Hornich–Hlawka inequality, thereby establishing a strict hierarchy between the two. The second result is a parametric inequality derived from the positive semidefiniteness of Gram matrices: for all $x,y,z \in H$ and $\alpha, \beta, \gamma \in \mathbb{R}$, 
\[ \alpha^2\|x\|^2\langle y,z\rangle^2 + \beta^2\|y\|^2\langle x,z\rangle^2 + \gamma^2\|z\|^2\langle x,y\rangle^2 + 2(\alpha\beta + \alpha\gamma + \beta\gamma)\langle x,y\rangle\langle x,z\rangle\langle y,z\rangle \;\geq\; 0. \]
Optimizing over the parameters yields sharp inequalities relating the pairwise inner products and norms of three vectors, which can be viewed as reverse inequalities to the Gram determinant inequality $\det G \geq 0$. As a special case, this recovers and strengthens the classical Cauchy–Schwarz inequality.
\end{abstract}

\keywords{2020 \emph{Mathematics Subject Classification.} 46C05; 26D15 }
\keywords{\emph{Key words and phrases.} Hornich-Hlawka inequality; Gram matrix; Cauchy-Schwarz inequality; inner product spaces; norm inequalities }

\tableofcontents

\section{Introduction}

The Hornich–Hlawka inequality — sometimes called Hlawka's inequality or the quadrilateral inequality — is a classical result in the geometry of normed spaces. In its standard form, it asserts that in any normed vector space $E$ in which the inequality holds,
\[\|x\| + \|y\| + \|z\| + \|x+y+z\| \;\geq\; \|x+y\| + \|x+z\| + \|y+z\| \qquad \forall\, x,y,z \in E.\]
The inequality was originally established by Hornich \cite{Hornich1942} and independently by Hlawka \cite{Hlawka1990}, and has since generated a substantial literature, being studied extensively from the perspectives of functional analysis, convex geometry, and the theory of inequalities. Fechner \cite{Fechner2014} initiated the systematic study of the associated functional inequality $f(x+y)+f(y+z)+f(x+z) \leq f(x+y+z)+f(x)+f(y)+f(z)$, investigating it for unknown mappings $f$ defined on Abelian groups and linear spaces. In \cite{Fechner2017}, he further analyzed the regularity of solutions of this functional inequality on topological abelian groups, establishing conditions under which measurability or local boundedness forces continuity, and characterizing the lower and upper hull of $f$. Fechner and Jabłońska \cite{FechnerJablonska2025} extended this line of inquiry to the almost-everywhere setting, studying the functional inequality for mappings $f:\mathbb{R}\to\mathbb{R}$ that need only be satisfied outside a Lebesgue null set. In a different direction, Honda, Okazaki and Takahashi \cite{HondaOkazakiTakahashi1998} generalized Hlawka's three-vector inequality in Hilbert space to an $n$-element inequality in $L^1$, relating it to Hanner's inequality and to the Adamović inequality. Ressel \cite{Ressel2015} showed that the Hornich–Hlawka inequality, initially stated for the identity function on the real line, extends to the entire class of Bernstein functions on the half-line, and proved in particular that it holds for the square-root function when applied to vectors in Euclidean space. Takahashi, Takahasi and Wada \cite{TakahashiTakahasiWada2009} investigated a general integral form of Hlawka's inequality on Banach spaces, establishing both the inequality and its reverse and clarifying the connection with the Djoković inequality in a weighted integral setting. Serre \cite{Serre2015} considered a genuinely different geometric setting: he proved that in the future cone of Minkowski space, the Lorentzian pseudo-norm satisfies a "reverse" Hlawka-type inequality — the orientation of the inequality is opposite to the Euclidean case, in direct analogy with the reverse of the Cauchy–Schwarz inequality that holds in that setting. While the inequality fails in certain normed spaces, it is known to hold in all inner product spaces, as well as in broader classes including $L^p$ spaces for $p \in [1,2]$. \\ \\
The other inequality discussed in the present paper belongs to the tradition of inequalities in inner product spaces derived from structural properties of Gram matrices. The nonnegativity of the Gram determinant $\det G \geq 0$ and the positive semidefiniteness of Gram matrices are classical tools for generating vector inequalities; see, e.g., \cite{AravHallLi2008}, where a Cauchy-Schwarz-type inequality for triples of vectors is established and interpreted geometrically in terms of the vertex angles of a tetrahedron, with Gram matrices playing a central role. A major impetus for the systematic study of such inequalities is provided by Buzano's inequality and its relatives. Dragomir \cite{Dragomir2005} established refinements of both Buzano's inequality and Kurepa's inequality in inner product spaces, using Gramian-based arguments to obtain improvements of Grüss-type inequalities and reverse Schwarz inequalities. Sababheh, Moradi and Heydarbeygi \cite{SababhehMoradiHeydarbeygi2022} refined the Cauchy-Schwarz and Buzano inequalities simultaneously and provided a new proof of a refined Kreĭn-type inequality, with applications to operator norm and numerical radius inequalities for Hilbert space operators. For a comprehensive treatment of the broader landscape of Schwarz-type inequalities in inner product spaces, including reverses, refinements, and connections to Grüss and Bessel inequalities, we refer to the monograph \cite{Dragomir2005book}. The inequality established in Section~\ref{section-3} of the present paper can be situated within this framework: it derives from the positive semidefiniteness of the Gram matrix via a single substitution, and its corollaries constitute sharp reverse inequalities to the Gram determinant condition, recovering the Cauchy-Schwarz inequality as a special case. \\ \\
The present paper makes two independent contributions to the theory of inequalities in inner product spaces. \\ \\
\textbf{First contribution.} In Section~\ref{section-2}, we prove that in any real inner product space $H$, the Hornich–Hlawka inequality admits the following multiplicative strengthening: for all $x,y,z \in H$,
\[\|x\|\,\|y\| + \|z\|\,\|x+y+z\| \;\geq\; \|x+z\|\,\|y+z\|.\]
This inequality is genuinely stronger than the classical one: we show that it implies the Hornich–Hlawka inequality via a direct algebraic argument, establishing a strict hierarchy. The proof proceeds by reducing the inequality to the nonnegativity of a quartic form $\xi$ in the entries of the Gram matrix of $x,y,z$. Viewing $\xi$ as a function over the convex cone of $3\times 3$ positive semidefinite matrices, we show that its minimum over this cone is attained on the boundary — that is, in configurations where the Gram matrix is singular and the three vectors are linearly dependent. The verification that $\xi \geq 0$ on this boundary is then carried out by direct substitution and algebraic factorization. We also provide a complete description of the equality cases (Proposition~\ref{proposition-equality-case}), showing that equality holds if and only if $x,y,z$ are linearly dependent and satisfy explicit algebraic conditions on their norms and inner products. In particular, equality can only occur in flat configurations, i.e., when the three vectors lie in a subspace of dimension at most two. \\ \\
\textbf{Second contribution.} In Section~\ref{section-3}, we prove a parametric family of inequalities that follows directly from the positive semidefiniteness of the Gram matrix $G$ associated to three vectors $x,y,z \in H$. Namely, we show that for all $\alpha, \beta, \gamma \in \mathbb{R}$,
\[\alpha^2\|x\|^2\langle y,z\rangle^2 + \beta^2\|y\|^2\langle x,z\rangle^2 + \gamma^2\|z\|^2\langle x,y\rangle^2 + 2(\alpha\beta + \alpha\gamma + \beta\gamma)\langle x,y\rangle\langle x,z\rangle\langle y,z\rangle \;\geq\; 0.\]
The proof is elementary: it consists in substituting $(\alpha\langle y,z\rangle,\, \beta\langle x,z\rangle,\, \gamma\langle x,y\rangle)$ as a test vector in the defining quadratic form of $G$. The content lies rather in the corollaries obtained by optimizing the parameters $\alpha, \beta, \gamma$. When $\langle x,y\rangle\langle x,z\rangle\langle y,z\rangle > 0$, the choice $(\alpha,\beta,\gamma) = (1,1,-2)$ (and its cyclic permutations) yields the sharp inequality 
\[ \|x\|^2\langle y,z\rangle^2 + \|y\|^2\langle x,z\rangle^2 + \|z\|^2\langle x,y\rangle^2 \;\geq\; 3\langle x,y\rangle\langle x,z\rangle\langle y,z\rangle, \] 
while when $\langle x,y\rangle\langle x,z\rangle\langle y,z\rangle < 0$, the choice $(\alpha,\beta,\gamma) = (1,1,1)$ yields the sharp inequality
\[\|x\|^2\langle y,z\rangle^2 + \|y\|^2\langle x,z\rangle^2 + \|z\|^2\langle x,y\rangle^2 \;\geq\; -6\langle x,y\rangle\langle x,z\rangle\langle y,z\rangle.\]
Both inequalities are sharp, as demonstrated by explicit examples. These results can be understood as reverse inequalities to the Gram determinant condition $\det G \geq 0$, and the first of them implies the Cauchy–Schwarz inequality as the special case $z = y$. \\
\mbox{} \\
\textbf{Notation.} Throughout, $H$denotes a real inner product space, with inner product $\langle \cdot, \cdot \rangle$ and associated norm $\|\cdot\| = \sqrt{\langle \cdot, \cdot \rangle}$.

\section{A stronger Hornich-Hlawka inequality}
\label{section-2}

\begin{theorem}[Strong Hornich–Hlawka inequality]
For all $x,y,z \in H$, we have
\[ \lVert x \rVert \lVert y \rVert + \lVert z \rVert \lVert x+y+z \rVert \geq \lVert x + z \rVert \lVert y + z \rVert, \]
Moreover, equality holds if and only if $x,y,z$ are linearly dependent and their dependence coefficients satisfy one of the equivalent conditions listed in proposition \ref{proposition-equality-case}
\end{theorem}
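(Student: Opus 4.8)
The plan is to recognize the stated inequality as a disguised instance of Ptolemy's inequality. First I would introduce the four auxiliary points $P_1 = 0$, $P_2 = -z$, $P_3 = -(x+z)$, $P_4 = y$ in $H$ and record the six pairwise distances: the two ``diagonals'' are $\lVert P_1 - P_3 \rVert = \lVert x+z \rVert$ and $\lVert P_2 - P_4 \rVert = \lVert y+z \rVert$, while the four ``sides'' are $\lVert P_1 - P_2 \rVert = \lVert z \rVert$, $\lVert P_3 - P_4 \rVert = \lVert x+y+z \rVert$, $\lVert P_2 - P_3 \rVert = \lVert x \rVert$, and $\lVert P_4 - P_1 \rVert = \lVert y \rVert$. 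With this dictionary, Ptolemy's inequality
\[ \lVert P_1 - P_3 \rVert \, \lVert P_2 - P_4 \rVert \leq \lVert P_1 - P_2 \rVert \, \lVert P_3 - P_4 \rVert + \lVert P_2 - P_3 \rVert \, \lVert P_4 - P_1 \rVert \]
is, after reading off the norms, exactly the claimed inequality $\lVert x+z \rVert \lVert y+z \rVert \leq \lVert z \rVert \lVert x+y+z \rVert + \lVert x \rVert \lVert y \rVert$. So it suffices to establish Ptolemy's inequality in an arbitrary real inner product space.

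To prove Ptolemy, I would translate $P_1$ to the origin, set $a = P_2$, $b = P_3$, $c = P_4$, and use the inversion $v \mapsto v^{\ast} := v / \lVert v \rVert^2$ (defined for $v \neq 0$). The key computation is the identity $\lVert u^{\ast} - w^{\ast} \rVert = \lVert u - w \rVert / (\lVert u \rVert \lVert w \rVert)$, which follows by expanding $\lVert u^{\ast} - w^{\ast} \rVert^2 = \lVert u \rVert^{-2} - 2\langle u, w \rangle \lVert u \rVert^{-2} \lVert w \rVert^{-2} + \lVert w \rVert^{-2}$. Dividing the target inequality $\lVert b \rVert \lVert c - a \rVert \leq \lVert a \rVert \lVert c - b \rVert + \lVert c \rVert \lVert b - a \rVert$ through by $\lVert a \rVert \lVert b \rVert \lVert c \rVert$ and applying this identity converts it into $\lVert a^{\ast} - c^{\ast} \rVert \leq \lVert a^{\ast} - b^{\ast} \rVert + \lVert b^{\ast} - c^{\ast} \rVert$, which is merely the triangle inequality for the three inverted points. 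The degenerate cases, where one of $z$, $x+z$, $y$ vanishes (so the inversion center coincides with another point), fall outside this argument and must be treated separately; each reduces to an ordinary application of the triangle inequality and is routine.

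The equality analysis is where I expect the main difficulty, and it is precisely the content deferred to Proposition \ref{proposition-equality-case}. Tracing the proof backwards, equality in the strong Hornich--Hlawka inequality forces equality in the triangle inequality for $a^{\ast}, b^{\ast}, c^{\ast}$, i.e. $b^{\ast}$ lies on the segment $[a^{\ast}, c^{\ast}]$ with the correct betweenness. Pulling this back through the inversion shows that $P_1, P_2, P_3, P_4$ are concyclic (or collinear) in the appropriate cyclic order. Since these four points contain the origin, their affine span coincides with $\Span\{x,y,z\}$, so lying on a common circle or line forces this span to be at most two-dimensional; this is the asserted linear dependence of $x,y,z$. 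The remaining, and most delicate, task is to convert the geometric betweenness/cyclic-order condition into explicit algebraic constraints on the dependence coefficients, carefully tracking signs and handling the boundary configurations; this bookkeeping is exactly what the equivalent conditions of Proposition \ref{proposition-equality-case} encode, to which I would defer the full verification.
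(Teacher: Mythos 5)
Your proposal is correct, and it takes a genuinely different route from the paper. You identify the inequality as Ptolemy's inequality for the quadrilateral $P_1=0$, $P_2=-z$, $P_3=-(x+z)$, $P_4=y$ (the dictionary checks out: the diagonal products give $\lVert x+z\rVert\,\lVert y+z\rVert$ and the opposite-side products give $\lVert z\rVert\,\lVert x+y+z\rVert$ and $\lVert x\rVert\,\lVert y\rVert$), and then prove Ptolemy by the classical inversion $v\mapsto v/\lVert v\rVert^2$, whose distance-distortion identity reduces everything to the triangle inequality; the degenerate cases you flag ($z=0$, $x+z=0$, $y=0$) are indeed immediate. The paper instead squares both sides twice, reduces to the nonnegativity of a quartic $\xi$ in the Gram-matrix entries $a,b,c,p,q,r$, observes that $\xi$ is concave in $p$ so its minimum over the PSD cone is attained where $\det G=0$, and verifies nonnegativity there by explicit factorizations such as $\xi=(a^2b^2-p^2)\bigl(a^2\lambda(\lambda+1)-b^2\mu(\mu+1)\bigr)^2$. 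Your argument is shorter and more conceptual, and it makes transparent why the inequality is tied to inner-product (ptolemaic) structure; the paper's computation, on the other hand, is what directly produces the explicit algebraic equality conditions recorded in Proposition \ref{proposition-equality-case}. On the equality case, your outline (equality in the triangle inequality for the inverted points forces collinearity, hence concyclicity of the $P_i$, hence a span of dimension at most two, hence linear dependence of $x,y,z$) is sound, and your deferral of the translation into the proposition's coefficient conditions is on par with what the paper itself does, since its proof of the theorem also leaves that bookkeeping to the proposition.
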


\begin{remark} \mbox{}\\*
\begin{itemize}
\item When $H = \mathbb{R}$ or $\mathbb{C}$, the inequality can be easily proved by expanding both sides and using the multiplicativity of the modulus (this property is senseless when we consider vector norms instead of the modulus of complex numbers).
\item This stronger Hornich-Hlawka inequality implies the classical Hornich-Hlawka inequality. Indeed, since both sides of the classical Hornich-Hlawka inequality are nonnegative, we may replace them with their squares. \\
For the left-hand side, we obtain
\[
\bigl( \|x\| + \|y\| + \|z\| + \|x + y + z\| \bigr)^2 = 2\bigl(\|x\|^2 + \|y\|^2 + \|z\|^2\bigr) + 2\bigl(\langle x,y\rangle + \langle y,z\rangle + \langle z,x\rangle\bigr)
\]
\[
+ 2\bigl( \|x\| + \|y\| + \|z\| \bigr) \|x + y + z\| + 2\bigl( \lVert x \rVert \lVert y \rVert + \lVert x \rVert \lVert z \rVert + \lVert y \rVert \lVert z \rVert \bigr).
\]
For the right-hand side, we have
\[
\bigl( \|x + y\| + \|y + z\| + \|z + x\| \bigr)^2 = 2\bigl(\|x\|^2 + \|y\|^2 + \|z\|^2\bigr) + 2\bigl(\langle x,y\rangle + \langle y,z\rangle + \langle z,x\rangle\bigr)
\]
\[
+ 2\bigl( \|x + y\| \cdot \|y + z\| + \|y + z\| \cdot \|z + x\| + \|z + x\| \cdot \|x + y\| \bigr).
\]

Thus, the classical Hornich-Hlawka inequalityis is equivalent to
\[
\bigl( \|x\| + \|y\| + \|z\| \bigr) \|x + y + z\| + \bigl( \lVert x \rVert \lVert y \rVert +  \lVert x \rVert \lVert z  \rVert + \lVert y \rVert \lVert z \rVert \bigr).
\]
\[
\geq \|x + y\| \cdot \|y + z\| + \|y + z\| \cdot \|z + x\| + \|z + x\| \cdot \|x + y\|
\]
which, after regrouping, is equivalent to
\[
\bigl( \lVert x \rVert \lVert y \rVert  + \|z\| \cdot \|x + y + z\| - \|y + z\| \cdot \|z + x\| \bigr) +
\]
\[
\bigl( \lVert y \rVert \lVert z \rVert + \|x\| \cdot \|x + y + z\| - \|z + x\| \cdot \|x + y\| \bigr) +
\]
\[
\bigl( \lVert x \rVert \lVert z  + \|y\| \cdot \|x + y + z\| - \|x + y\| \cdot \|y + z\| \bigr) \geq 0.
\]
Therefore, it is clear that the stronger Hornich-Hlawka inequality implies the classical Hornich-Hlawka inequality.
\end{itemize}
\end{remark}

\begin{proof} \mbox{}\\* \mbox{}\\*
\textbf{Idea:} we reduce the inequality to a quartic form in the Gram matrix entries and show that its minimum over the PSD cone is attained on the boundary. \\
\begin{itemize}
\item \textbf{First step. Reduction.} \\
Denote by $L$ and $R$ the left and right hand sides of this inequality, respectively. \\
We have
\begin{align*}
L^2 &= \lVert x \rVert ^2 \lVert y \rVert^2 + \lVert z \rVert^2 \left( \lVert x \rVert^2 + \lVert y \rVert^2 + \lVert z \rVert^2 + 2 \langle x,y \rangle + 2 \langle x,z \rangle + 2\langle y,z \rangle  \right) + 2 \lVert x \rVert \lVert y \rVert \lVert z \rVert \lVert x + y + z \rVert \\
&=  \lVert x \rVert ^2 \lVert y \rVert^2 + \lVert z \rVert^2 \lVert x \rVert^2 + \lVert z \rVert^2 \lVert y \rVert^2 + \lVert z \rVert^4 + 2\lVert z \rVert^2 \langle x , y \rangle +  2\lVert z \rVert^2 \langle x , z \rangle +  2\lVert z \rVert^2 \langle y , z \rangle \\
&\quad \quad + 2 \lVert x \rVert \lVert y \rVert \lVert z \rVert \lVert x + y + z \rVert
\end{align*}
and
\begin{align*}
R^2 &= \left( \lVert x \rVert^2 + \lVert z \rVert^2 + 2\langle x , z \rangle \right) \left( \lVert y \rVert^2 + \lVert z \rVert^2 + 2\langle y , z \rangle \right) \\
&= \lVert x \rVert ^2 \lVert y \rVert^2 + \lVert z \rVert^2 \lVert x \rVert^2 + \lVert z \rVert^2 \lVert y \rVert^2 + \lVert z \rVert^4  + 2 \langle x , z \rangle \lVert y \rVert^2 + 2\langle x , z \rangle \lVert z \rVert^2 \\
&\quad \quad + 2 \langle y , z \rangle \lVert x \rVert^2 + 2 \langle y , z \rangle \lVert z \rVert^2 + 4 \langle x , z \rangle \langle y , z \rangle
\end{align*}
We have
\begin{align*}
&L \geq R \\
&\Leftrightarrow L^2 \geq R^2 \\
&\Leftrightarrow \frac{L^2 - \left( \lVert x \rVert ^2 \lVert y \rVert^2 + \lVert z \rVert^2 \lVert x \rVert^2 + \lVert z \rVert^2 \lVert y \rVert^2 + \lVert z \rVert^4 \right)}{2} \geq \frac{R^2 - \left( \lVert x \rVert ^2 \lVert y \rVert^2 + \lVert z \rVert^2 \lVert x \rVert^2 + \lVert z \rVert^2 \lVert y \rVert^2 + \lVert z \rVert^4 \right)}{2} \\
&\Leftrightarrow \lVert z \rVert^2 \langle x , y \rangle +  \lVert z \rVert^2 \langle x , z \rangle +  \lVert z \rVert^2 \langle y , z \rangle + \lVert x \rVert \lVert y \rVert \lVert z \rVert \lVert x + y + z \rVert \geq \\
&\quad \quad \quad \langle x , z \rangle \lVert y \rVert^2 + \langle x , z \rangle \lVert z \rVert^2 +  \langle y , z \rangle \lVert x \rVert^2 + \langle y , z \rangle \lVert z \rVert^2 + 2 \langle x , z \rangle \langle y , z \rangle \\
&\Leftrightarrow \lVert z \rVert^2 \langle x , y \rangle + \lVert x \rVert \lVert y \rVert \lVert z \rVert \lVert x + y + z \rVert \geq \lVert x \rVert^2 \langle y , z \rangle + \lVert y \rVert^2 \langle x , z \rangle + 2 \langle x , z \rangle \langle y , z \rangle
\end{align*}
This inequality is equivalent to
\[ \Leftrightarrow \lVert x \rVert \lVert y \rVert \lVert z \rVert \lVert x + y + z \rVert \geq \lVert x \rVert^2 \langle y , z \rangle + \lVert y \rVert^2 \langle x , z \rangle - \lVert z \rVert^2 \langle x , y \rangle + 2 \langle x , z \rangle \langle y , z \rangle \]
Denote by $\textbf{L}$ and $\textbf{R}$ the left and right hand sides of this last inequality, respectively. \\
If $\textbf{R} \leq 0$, the inequality is trivial, so we can suppose that $\textbf{R} \geq 0$. \\
Set 
\[ a = \lVert x \rVert, \quad b = \lVert y \rVert, \quad c = \lVert z \rVert, \quad p = \langle x,y \rangle, \quad q = \langle x,z \rangle, \quad r = \langle y,z \rangle. \]
We have 
\begin{align*}
\textbf{L}^2 &=  \lVert x \rVert^2  \lVert y \rVert^2  \lVert z \rVert^2  \left( \lVert x \rVert^2 + \lVert y \rVert^2  + \lVert z \rVert^2 + 2  \langle x , y \rangle + 2 \langle x , z \rangle +  2 \langle y , z \rangle \right) \\
&= a^2 b^2 c^2 (a^2 + b^2 + c^2 + 2p + 2q + 2r).
\end{align*}
and
\begin{align*}
\textbf{R}^2 &= \lVert x \rVert^4 \langle y , z \rangle^2 +  \lVert y \rVert^4 \langle x , z \rangle^2 +  \lVert z \rVert^4 \langle x , y \rangle^2 +  4 \langle x , z \rangle^2 \langle y , z \rangle^2 \\
&\quad + 2 \lVert x \rVert^2 \lVert y \rVert^2 \langle x , z \rangle \langle y , z \rangle - 2 \lVert x \rVert^2 \lVert z \rVert^2  \langle x , y \rangle  \langle y , z \rangle   - 2 \lVert y \rVert^2 \lVert z \rVert^2 \langle x , y \rangle  \langle x , z \rangle   \\
&\quad + 4 \lVert x \rVert^2  \langle x , z \rangle \langle y , z \rangle ^2 + 4 \lVert y \rVert^2 \langle y , z \rangle \langle x , z \rangle ^2 - 4 \lVert z \rVert^2 \langle x , y \rangle \langle x , z \rangle  \langle y , z \rangle  \\
&=  a^4 r^2 + b^4 q^2 + c^4 p^2 + 4q^2 r^2 + 2a^2 b^2 qr - 2a^2 c^2 pr - 2b^2 c^2 pq + 4a^2 q r^2 + 4b^2 r q^2 - 4c^2 pqr
\end{align*}
Consider 
\begin{align*}
\xi &= \textbf{L}^2 - \textbf{R}^2 \\
&= a^2 b^2 c^2 (a^2 + b^2 + c^2 + 2p + 2q + 2r) \\
&\quad - \left( a^4 r^2 + b^4 q^2 + c^4 p^2 + 4q^2 r^2 + 2a^2 b^2 qr - 2a^2 c^2 pr - 2b^2 c^2 pq + 4a^2 q r^2 + 4b^2 r q^2 - 4c^2 pqr \right)
\end{align*}

\item \textbf{Second step. The minimum of $\xi$ over the PSD cone is attained on the boundary.} \\
Consider $\xi$ as a quadratic function in one variable, say $p$, for fixed $a, b, c, q, r$ (satisfying the relevant conditions, see next). \\ 
The coefficient of $p^2$ is $-c^4 \leq 0$, so it is a downward-opening parabola. \\
The set of allowed values $a, b, c \geq 0$, $p, q, r$ are such that the Gram matrix
\[
G = \begin{pmatrix}
a^2 & p & q \\
p & b^2 & r \\
q & r & c^2
\end{pmatrix}
\]
is positive semidefinite. \\
This requires all principal minors to be nonnegative: $|p| \leq ab$, $|q| \leq ac$, $|r| \leq bc$, and $\det G \geq 0$. \\
Since the allowed range for $p$ is the interval where $\det G \geq 0$ and $|p| \leq ab$, which is closed and bounded, the minimum of $\xi$ over this interval occurs at one of the endpoints. \\
The endpoints correspond to boundaries where either $\det G = 0$ or $p = \pm ab$ (but note that $p = \pm ab$ implies the 2x2 minor for $x,y$ is zero, which implies $x,y$ are linearly dependent, which forces $\det G = 0$). Thus, the minima occur on the boundary where $G$ is singular ($\det G = 0$). \\
To show $\xi \geq 0$ on this boundary, assume linear dependence:
\begin{itemize}
\item If $z = \lambda x + \mu y$, then substitute
\begin{align*}
c^2 &= \lambda^2 a^2 + \mu^2 b^2 + 2 \lambda \mu p \\
q &= \lambda a^2 + \mu p \\
r &= \lambda p + \mu b^2
\end{align*}
After substitution and simplification, $\xi$ factors as
\[ \xi = (a^2 b^2 - p^2) \left( a^2 \lambda (\lambda + 1) - b^2 \mu (\mu + 1) \right)^2. \]
The first factor is nonnegative by the Cauchy-Schwarz inequality, and the second factor is a square, hence nonnegative. Therefore, $\xi \geq 0$.

\item If $x = \lambda y + \mu z$, then substitute
\begin{align*}
a^2 &= \lambda^2 b^2 + \mu^2 c^2 + 2 \lambda \mu r, \\
p &= \lambda b^2 + \mu r, \\
q &= \lambda r + \mu c^2.
\end{align*}
After substitution and simplification, $\xi$ factors as
\begin{equation*}
\xi  = \Bigl( b^2 c^2 - r^2 \Bigr)
       \Bigl( b^2 \lambda (\lambda + 1) + c^2 \mu (\mu + 1) + 2 \lambda (\mu + 1) r \Bigr)^2.
\end{equation*}
The first factor is nonnegative by the Cauchy-Schwarz inequality, and the second factor is a square, hence nonnegative. Therefore, $\xi \geq 0$.
\item If $y = \lambda x + \mu z$, then substitute
\begin{align*}
b^2 &= \lambda^2 a^2 + \mu^2 c^2 + 2 \lambda \mu q, \\
p &= \lambda a^2 + \mu q, \\
r &= \lambda q + \mu c^2.
\end{align*}
After substitution and simplification, $\xi$ factors as
\[
\xi = \left( a^2 c^2 - q^2 \right) \left( a^2 \lambda (\lambda + 1) + c^2 \mu (\mu + 1) + 2 \lambda (\mu + 1) q \right)^2.
\]
The first factor is nonnegative by the Cauchy-Schwarz inequality, and the second factor is a square, hence nonnegative. Therefore, $\xi \geq 0$.
\end{itemize}
Therefore, $\xi \geq 0$ on the boundary. 

\item \textbf{Conclusion.} \\
Since $\xi \geq 0$ on the boundary, we have $\xi \geq 0$ throughout the domain. \\
Equality arises in configurations where the vectors are linearly dependent, with additional conditions on the scalar products and norms: the inequality becomes an equality when the geometry is "flat" (vectors in a plane or a line).
\end{itemize}
\end{proof}

\begin{proposition}[Equality cases]
\label{proposition-equality-case}
Let $x,y,z\in H$.
Equality holds in the inequality
\[
\|x\|\|y\|+\|z\|\|x+y+z\|=\|x+z\|\|y+z\|
\]
if and only if the vectors $x,y,z$ are linearly dependent and one of the following cases occurs.

\begin{enumerate}
\item[\textup{(i)}]
There exist real numbers $\lambda,\mu$ such that
\[
z=\lambda x+\mu y
\]
and
\[
\|x\|^{2}\,\lambda(\lambda+1)
=
\|y\|^{2}\,\mu(\mu+1).
\]

\item[\textup{(ii)}]
There exist real numbers $\lambda,\mu$ such that
\[
x=\lambda y+\mu z
\]
and
\[
\|y\|^{2}\,\lambda(\lambda+1)
+
\|z\|^{2}\,\mu(\mu+1)
+
2\lambda(\mu+1)\langle y,z\rangle
=0.
\]

\item[\textup{(iii)}]
There exist real numbers $\lambda,\mu$ such that
\[
y=\lambda x+\mu z
\]
and
\[
\|x\|^{2}\,\lambda(\lambda+1)
+
\|z\|^{2}\,\mu(\mu+1)
+
2\lambda(\mu+1)\langle x,z\rangle
=0.
\]
\end{enumerate}

In all cases, the Gram matrix
\[
G=
\begin{pmatrix}
\|x\|^2 & \langle x,y\rangle & \langle x,z\rangle\\
\langle x,y\rangle & \|y\|^2 & \langle y,z\rangle\\
\langle x,z\rangle & \langle y,z\rangle & \|z\|^2
\end{pmatrix}
\]
is singular. In particular, equality occurs only in flat configurations, i.e.,
when the vectors $x,y,z$ lie in a common one or two dimensional subspace.
\end{proposition}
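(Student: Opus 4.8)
The plan is to prove the equality-case proposition by reading it off directly from the factorizations already obtained in the proof of the main theorem. The theorem reduced the inequality $L \geq R$ to the statement $\xi \geq 0$, where $\xi = \mathbf{L}^2 - \mathbf{R}^2$. The key observation is that equality $L = R$ forces $\xi = 0$ (provided $\mathbf{R} \geq 0$, which I address below), and on each of the three linear-dependence strata the proof exhibited $\xi$ as a product of a Cauchy--Schwarz factor and a perfect square. So the first step is to confirm that equality in the original inequality is equivalent to $\xi = 0$ together with singularity of $G$.

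First I would establish that equality forces linear dependence, hence $\det G = 0$. The argument in the theorem showed that $\xi$, viewed as a downward-opening parabola in $p$ over the closed bounded PSD interval, attains its minimum at an endpoint, and those endpoints lie on $\det G = 0$; moreover in the interior of the PSD cone the strict inequality $\xi > 0$ should hold, so equality can only occur on the boundary $\det G = 0$, i.e.\ when $x,y,z$ are linearly dependent. Care is needed with the sign reduction: the passage from $L \geq R$ to $\mathbf{L} \geq \mathbf{R}$ and then to $\mathbf{L}^2 \geq \mathbf{R}^2$ is only an equivalence when $\mathbf{R} \geq 0$; in the case $\mathbf{R} < 0$ the original inequality is strict (since $\mathbf{L} \geq 0 > \mathbf{R}$), so equality cannot occur there, and the boundary analysis via $\xi = 0$ is legitimate precisely on the region where equality is possible.

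Next I would treat each linear-dependence case separately using the three factorizations. When $z = \lambda x + \mu y$, the proof gives $\xi = (a^2 b^2 - p^2)\bigl(a^2\lambda(\lambda+1) - b^2\mu(\mu+1)\bigr)^2$. Hence $\xi = 0$ iff either $a^2 b^2 = p^2$ (i.e.\ $x,y$ already linearly dependent, a degenerate subcase) or $a^2\lambda(\lambda+1) = b^2\mu(\mu+1)$, which is precisely condition (i). The analogous substitutions yield conditions (ii) and (iii) from the two other factorizations, each again equating a squared factor to zero. Translating $a^2 = \|x\|^2$, $b^2 = \|y\|^2$, $c^2 = \|z\|^2$ back into norm notation recovers the stated forms verbatim.

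The main obstacle I anticipate is the bookkeeping of the degenerate subcases where the Cauchy--Schwarz factor vanishes (e.g.\ $a^2b^2 = p^2$), since then $\xi = 0$ automatically and the second condition becomes vacuous; one must check that such configurations are still correctly described by one of (i)--(iii), possibly after reindexing which pair of vectors is chosen as the basis of the dependence. The cleanest way to handle this is to note that whenever $\det G = 0$ the three vectors span a subspace of dimension at most two, so at least one of them is a linear combination of the other two, and one may always choose the representation so that the relevant two-dimensional Cauchy--Schwarz factor is strictly positive unless \emph{all three} vectors are collinear — in which case every condition (i)--(iii) is easily verified by direct substitution. I would close by remarking that in every case $\det G = 0$, establishing the final assertion that equality occurs only in flat (one- or two-dimensional) configurations.
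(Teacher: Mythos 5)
The paper offers no proof of this proposition at all --- it is stated immediately after the theorem, and the authors implicitly rely on the factorizations of $\xi$ obtained in the theorem's proof --- so your plan of reading the equality cases off those factorizations is exactly the intended route. Your necessity argument is essentially sound: equality forces $\mathbf{R}\ge 0$ and $\xi=0$; strict concavity of $\xi$ in $p$ (valid when $z\neq 0$, so that the leading coefficient $-c^{4}$ is strictly negative) pushes any zero of $\xi$ to the boundary $\det G=0$; and on each dependence stratum $\xi=0$ forces one of the squared factors to vanish.

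There is, however, a genuine gap in the sufficiency direction, and it is not merely bookkeeping. The equation $\xi=\mathbf{L}^{2}-\mathbf{R}^{2}=0$ only says $\mathbf{L}=\lvert\mathbf{R}\rvert$, whereas equality in the original inequality requires $\mathbf{L}=\mathbf{R}$, i.e.\ additionally $\mathbf{R}\ge 0$. You flag this sign condition when arguing necessity, but you never verify that conditions (i)--(iii) imply it --- and they do not. Take $x=(1,0)$, $y=(0,1)$, $z=-\tfrac12(x+y)$: condition (i) holds with $\lambda=\mu=-\tfrac12$ (both sides equal $-\tfrac14$), yet $\|x\|\|y\|+\|z\|\|x+y+z\|=\tfrac32$ while $\|x+z\|\|y+z\|=\tfrac12$; here $\mathbf{R}=-\tfrac12<0$ and $\mathbf{L}=\tfrac12=-\mathbf{R}$, so $\xi=0$ without equality. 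The same defect appears in your collinear case: for $x=y=e$, $z=-e$ with $e$ a unit vector, condition (i) holds with $\lambda=-1$, $\mu=0$, but the left side is $2$ and the right side is $0$, so your claim that every condition is ``easily verified by direct substitution'' there actually certifies configurations where equality fails. Consequently the ``if'' direction of your proof does not go through --- and indeed the proposition as stated is false in that direction; a correct characterization must adjoin to (i)--(iii) the sign constraint $\|x\|^{2}\langle y,z\rangle+\|y\|^{2}\langle x,z\rangle-\|z\|^{2}\langle x,y\rangle+2\langle x,z\rangle\langle y,z\rangle\ge 0$.
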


\section{An inequality based on Gram matrices}
\label{section-3}

\begin{theorem}
We have, for all vectors $x,y,z \in H$ and real numbers $\alpha,\beta,\gamma$:
\[ \alpha^2  \|x\|^2 \langle y, z \rangle^2 + \beta^2 \|y\|^2 \langle x, z \rangle^2 + \gamma^2 \|z\|^2 \langle x, y \rangle^2  + 2  (\alpha\beta + \alpha\gamma + \beta\gamma) \langle x, y\rangle \langle x, z\rangle \langle y, z\rangle \geq 0. \]
\end{theorem}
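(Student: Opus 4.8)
The plan is to recognize the entire left-hand side as the squared norm of one cleverly chosen vector, after which nonnegativity is automatic. Reusing the abbreviations $p=\langle x,y\rangle$, $q=\langle x,z\rangle$, $r=\langle y,z\rangle$, I would first view the expression
\[
Q(\alpha,\beta,\gamma)=\alpha^{2}\|x\|^{2}r^{2}+\beta^{2}\|y\|^{2}q^{2}+\gamma^{2}\|z\|^{2}p^{2}+2(\alpha\beta+\alpha\gamma+\beta\gamma)\,pqr
\]
as a quadratic form $Q(\alpha,\beta,\gamma)=v^{\top}Mv$ in the vector $v=(\alpha,\beta,\gamma)^{\top}$, where
\[
M=\begin{pmatrix}\|x\|^{2}r^{2} & pqr & pqr\\ pqr & \|y\|^{2}q^{2} & pqr\\ pqr & pqr & \|z\|^{2}p^{2}\end{pmatrix}.
\]
The theorem is then exactly the assertion that $M$ is positive semidefinite.

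The decisive observation is that $M$ is a diagonal congruence of the Gram matrix. Setting $D=\operatorname{diag}(r,q,p)$ and letting $G$ denote the Gram matrix of $x,y,z$ from Proposition \ref{proposition-equality-case}, an entrywise check shows $M=DGD=D^{\top}GD$. Since $G$ is positive semidefinite — being a Gram matrix — congruence by the real symmetric $D$ preserves this, so $M\succeq 0$ and therefore $Q(\alpha,\beta,\gamma)\ge 0$ for all real $\alpha,\beta,\gamma$.

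Equivalently, and more transparently, I would simply produce the vector that $D$ encodes. Put
\[
w=\alpha\langle y,z\rangle\,x+\beta\langle x,z\rangle\,y+\gamma\langle x,y\rangle\,z .
\]
Expanding $\|w\|^{2}=\langle w,w\rangle$ produces three diagonal terms $\alpha^{2}\|x\|^{2}r^{2}+\beta^{2}\|y\|^{2}q^{2}+\gamma^{2}\|z\|^{2}p^{2}$ together with three cross terms; for instance the $x$--$y$ cross term equals $2\alpha\beta\,\langle y,z\rangle\langle x,z\rangle\langle x,y\rangle=2\alpha\beta\,pqr$, and the remaining two behave identically. Hence $\|w\|^{2}=Q(\alpha,\beta,\gamma)$ identically, and the inequality is nothing more than $\|w\|^{2}\ge 0$.

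There is no genuine analytic difficulty here: the only nontrivial step is to guess the correct vector $w$, equivalently the correct diagonal scaling $D=\operatorname{diag}(\langle y,z\rangle,\langle x,z\rangle,\langle x,y\rangle)$ pairing each vector with the inner product of the other two. Once this is in hand the argument is a short expansion. As confirmation of sharpness I would also record the determinant identity $\det M=p^{2}q^{2}r^{2}\det G\ge 0$, and note that specializing to $\gamma=0$ forces the leading $2\times 2$ principal submatrix of $M$ to be positive semidefinite, whose determinant condition $\|x\|^{2}\|y\|^{2}q^{2}r^{2}\ge p^{2}q^{2}r^{2}$ is (for $q,r\neq 0$) precisely the Cauchy--Schwarz inequality $\langle x,y\rangle^{2}\le\|x\|^{2}\|y\|^{2}$, as advertised.
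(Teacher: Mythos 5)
Your proof is correct and takes essentially the same approach as the paper: the paper also starts from $v^{\top}Gv\ge 0$ for the Gram matrix $G$ and applies exactly the substitution $\alpha\mapsto\alpha\langle y,z\rangle$, $\beta\mapsto\beta\langle x,z\rangle$, $\gamma\mapsto\gamma\langle x,y\rangle$, which is precisely your diagonal congruence $M=DGD$, i.e.\ the identity $Q(\alpha,\beta,\gamma)=\|w\|^{2}$ for your vector $w$. The only difference is presentational — you make the underlying vector $w$ and the congruence explicit, which the paper leaves implicit.
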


\begin{proof}
Given \(x, y, z\) in $H$, the Gram matrix associated to $(x,y,z)$ is  
\[
G = \begin{pmatrix}
\langle x, x \rangle & \langle x, y \rangle & \langle x, z \rangle \\
\langle y, x \rangle & \langle y, y \rangle & \langle y, z \rangle \\
\langle z, x \rangle & \langle z, y \rangle & \langle z, z \rangle
\end{pmatrix}.
\]
It is positive semidefinite, so for any real numbers \(\alpha, \beta, \gamma\), we have  
\[
Q(\alpha, \beta, \gamma) = (\alpha, \beta, \gamma) \, G \, \begin{pmatrix} \alpha \\ \beta \\ \gamma \end{pmatrix} \ge 0.
\]
Expanding, we have
\[
Q(\alpha, \beta, \gamma) =
\alpha^2 \|x\|^2 + \beta^2 \|y\|^2 + \gamma^2 \|z\|^2
+ 2\alpha\beta\langle x, y\rangle + 2\alpha\gamma\langle x, z\rangle + 2\beta\gamma\langle y, z\rangle \geq 0.
\]
Replace
\[
\alpha \mapsto \alpha \langle y, z \rangle,
\]
\[
\beta \mapsto \beta \langle x, z \rangle,
\]
\[
\gamma \mapsto \gamma \langle x, y \rangle.
\]
Then
\begin{align*}
R(\alpha,\beta,\gamma) &= Q(\alpha \langle y, z \rangle, \beta \langle x, z \rangle , \gamma  \langle x, y \rangle) \\
&=  \alpha^2  \|x\|^2 \langle y, z \rangle^2 + \beta^2 \|y\|^2 \langle x, z \rangle^2 + \gamma^2 \|z\|^2 \langle x, y \rangle^2  \\
&\quad + 2  (\alpha\beta + \alpha\gamma + \beta\gamma) \langle x, y\rangle \langle x, z\rangle \langle y, z\rangle \geq 0.
\end{align*}
\end{proof}

\begin{corollary} \mbox{}\\*
\begin{itemize}
\item If $\langle x, y\rangle \langle x, z\rangle \langle y, z\rangle > 0$, we have
\[ \|x\|^2 \langle y, z \rangle^2 + \|y\|^2 \langle x, z \rangle^2 + \|z\|^2 \langle x, y \rangle^2  \geq 3 \langle x, y\rangle \langle x, z\rangle \langle y, z\rangle. \]
\item If $\langle x, y\rangle \langle x, z\rangle \langle y, z\rangle < 0$, we have
\[ \|x\|^2 \langle y, z \rangle^2 + \|y\|^2 \langle x, z \rangle^2 + \|z\|^2 \langle x, y \rangle^2  \geq - 6 \langle x, y\rangle \langle x, z\rangle \langle y, z\rangle.\]
\end{itemize}
These inequalities are \textbf{sharp}.
\end{corollary}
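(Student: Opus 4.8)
The plan is to read the theorem as the positive semidefiniteness of the symmetric matrix
\[
M=\begin{pmatrix} A & P & P\\ P & B & P\\ P & P & C\end{pmatrix},
\qquad
A=\|x\|^2\langle y,z\rangle^2,\;\;
B=\|y\|^2\langle x,z\rangle^2,\;\;
C=\|z\|^2\langle x,y\rangle^2,\;\;
P=\langle x,y\rangle\langle x,z\rangle\langle y,z\rangle,
\]
since the assertion $R(\alpha,\beta,\gamma)=(\alpha,\beta,\gamma)\,M\,(\alpha,\beta,\gamma)^{T}\ge 0$ for all real $\alpha,\beta,\gamma$ is precisely the theorem. Both bounds will then be obtained by feeding well-chosen test vectors $(\alpha,\beta,\gamma)$ into this form, and sharpness will be verified on explicit equal-angle configurations.

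For the case $P>0$, I would first extract the three Cauchy--Schwarz-type relations
\[
AB\ge P^2,\qquad AC\ge P^2,\qquad BC\ge P^2
\]
by setting one of $\alpha,\beta,\gamma$ to zero in the theorem: for instance $\gamma=0$ yields the binary form $\alpha^2A+\beta^2B+2\alpha\beta P\ge 0$, whose nonpositive discriminant forces $AB\ge P^2$. Multiplying the three relations gives $(ABC)^2\ge P^6$; since $P>0$ forces every pairwise inner product, and hence each of $A,B,C$, to be strictly positive, this yields $ABC\ge P^3$. The arithmetic--geometric mean inequality then closes the argument via
\[
A+B+C\ \ge\ 3(ABC)^{1/3}\ \ge\ 3P.
\]

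For the case $P<0$, the single substitution $\alpha=\beta=\gamma=1$ in the theorem gives
\[
A+B+C+6P\ \ge\ 0,
\]
that is $A+B+C\ge -6P$, a quantity which is positive precisely because $P<0$; no further work is required here.

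It remains to confirm \textbf{sharpness}. For the first bound, taking $x=y=z$ a unit vector gives $A=B=C=P=1$, so $A+B+C=3=3P$; more generally, three unit vectors with common pairwise inner product $c\in(-\tfrac12,1)$ give $A+B+C=3c^2$ and $3P=3c^3$, whose ratio tends to $1$ as $c\to 1$. For the second bound, three coplanar unit vectors at mutual angle $120^{\circ}$, i.e.\ with pairwise inner product $-\tfrac12$, give $A=B=C=\tfrac14$ and $P=-\tfrac18$, so that $A+B+C=\tfrac34=-6P$, attaining equality. The main obstacle I anticipate lies in the $P>0$ case: one must correctly combine the three separate Cauchy--Schwarz minors with AM--GM and verify that $P>0$ excludes the degenerate possibilities (a vanishing norm or a vanishing inner product) which would otherwise obstruct the cube-root step, whereas the $P<0$ case and both sharpness computations are essentially immediate.
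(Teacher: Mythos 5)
Your proof is correct, and the overall skeleton (test vectors in the quadratic form $R$, then explicit configurations for sharpness) matches the paper; but your treatment of the first case takes a genuinely different route. The paper evaluates the form at $(1,1,-2)$, $(1,-2,1)$ and $(-2,1,1)$, obtaining three inequalities of the shape $A+B+4C\ge 6P$, and averages them to get $A+B+C\ge 3P$. You instead extract the three $2\times 2$ minor conditions $AB\ge P^2$, $AC\ge P^2$, $BC\ge P^2$ (each of which reduces to the Cauchy--Schwarz inequality multiplied by a square), deduce $ABC\ge P^3$ after verifying that $P>0$ forces $A,B,C>0$, and close with AM--GM. Both arguments are valid; the paper's averaging trick stays entirely inside the quadratic form and is arguably slicker, whereas your route makes explicit that the first bound already follows from Cauchy--Schwarz plus AM--GM, without invoking the full $3\times 3$ positive semidefiniteness of the Gram matrix --- a point worth weighing against the paper's remark that the first bullet \emph{implies} Cauchy--Schwarz, since in your derivation the two become essentially equivalent. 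Your explicit check that no norm or pairwise inner product vanishes when $P>0$ is precisely what legitimizes the cube-root step, and you handle it correctly; the second case and both equality configurations coincide with the paper's.
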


\begin{remark} \mbox{}\\* 
\begin{itemize}
\item This corollary can be understood as a reverse inequality of the Gram determinant inequality
\[ \det G \geq 0 \]
which translates to
\[ \|x\|^2 \|y\|^2 \|z\|^2 + 2\langle x, y \rangle \langle x, z \rangle \langle y, z \rangle \geq \|x\|^2 \langle y, z \rangle^2 + \|y\|^2 \langle x, z \rangle^2 + \|z\|^2 \langle x, y \rangle^2. 
\]
\item The first point of the corollary implies the Cauchy-Schwarz inequality, and therefore all other inequalities deduced from it. One can show this by taking $z=y$.
\end{itemize}
\end{remark}

\begin{proof} \mbox{}\\*
\begin{itemize}
\item If 
\[ \langle x, y\rangle \langle x, z\rangle \langle y, z\rangle \geq 0 \]
Then $R(1,1,-2) > 0$ implies
\[ \|x\|^2 \langle y, z \rangle^2 + \|y\|^2 \langle x, z \rangle^2 + 4 \|z\|^2 \langle x, y \rangle^2  \geq 6 \langle x, y\rangle \langle x, z\rangle \langle y, z\rangle.\]
Summing up symmetrically and dividing by 6, we get
\[ \|x\|^2 \langle y, z \rangle^2 + \|y\|^2 \langle x, z \rangle^2 + \|z\|^2 \langle x, y \rangle^2  \geq 3 \langle x, y\rangle \langle x, z\rangle \langle y, z\rangle.\]
\item If
\[ \langle x, y\rangle \langle x, z\rangle \langle y, z\rangle < 0 \]
Then $R(1,1,1) \geq 0$ implies
\[ \|x\|^2 \langle y, z \rangle^2 + \|y\|^2 \langle x, z \rangle^2 + \|z\|^2 \langle x, y \rangle^2  \geq - 6 \langle x, y\rangle \langle x, z\rangle \langle y, z\rangle. \]
\end{itemize}
\textbf{Sharpness.} For $D>0$, take \( x = y = z \) nonzero. It yields LHS = RHS. \\
For $D < 0$, take \(x = (1,0,0)\), \(y = \left(\frac12, \frac{\sqrt{3}}{2}, 0\right)\), \(z = \left(\frac12, -\frac{\sqrt{3}}{2}, 0\right)\). \\ 
We have
\[
\langle x, y \rangle = 1\cdot \frac12 + 0\cdot \frac{\sqrt{3}}{2} = \frac12
\]
\[
\langle x, z \rangle = 1\cdot \frac12 + 0\cdot \left(-\frac{\sqrt{3}}{2}\right) = \frac12
\]
\[
\langle y, z \rangle = \frac12\cdot \frac12 + \frac{\sqrt{3}}{2} \cdot \left(-\frac{\sqrt{3}}{2}\right) = \frac14 - \frac{3}{4} = -\frac12
\]
So 
\[ RHS = -6\left(\frac12\right)\left(\frac12\right)\left(-\frac12\right) = \frac68 = \frac34. \]
Moreover
\[
\|x\|^2 \langle y, z\rangle^2 = 1\cdot \frac14 = \frac14
\]
\[
\|y\|^2 \langle x, z\rangle^2 = 1\cdot \frac14 = \frac14
\]
\[
\|z\|^2 \langle x, y\rangle^2 = 1\cdot \frac14 = \frac14
\]
So 
\[ LHS = \frac14 + \frac14 + \frac14  = \frac 34. \]
Equality holds.
\end{proof}

\nocite{*} 
\bibliographystyle{amsplain}
\bibliography{references}

\Addresses

\end{document}